\begin{document}

\title{Optimization of the Implicit Constant for Upper Bounds for Moments of the Riemann Zeta Function}
\author{Tingyu Tao}
\maketitle

\newtheorem{DL}{Theorem}[section]
\newtheorem{MT}[DL]{Proposition}
\newtheorem{TL}[DL]{Corollary}
\newtheorem{YL}[DL]{Lemma}

\newcommand{\bi}{{\beta_i}}
\newcommand{\bj}{{\beta_j}}
\newcommand{\bI}{{\beta_{\mathcal I}}}
\newcommand{\m}{{\rm meas}}

\begin{abstract}
We optimized the implicit constant for the refined upper bounds for moments of the Riemann zeta-function proved by Harper. We also computed the implicit constant for the upper bounds for moments of the Riemann zeta-function proved by Soundararajan under certain conditions.
\end{abstract}

\section{Introduction}

Computing moments of the Riemann zeta function is a well-studied subject in number theory. For $k\geq 0$, the $2k$-th moment of the Riemann zeta function is defined as
$$I_k\left(T\right):=\int_0^T\left|\zeta\left(\frac{1}{2}+it\right)\right|^{2k}dt.$$
Hardy and Littlewood \cite{hardy} proved that $I_1\left(T\right)\sim T\log T$, and Ingham \cite{ingham} proved that $I_2\left(T\right)\sim\frac{1}{2\pi^2}T\left(\log T\right)^2$, while the asymptotic formulas of the $2k$-th moment for $k>2$ remain unproved. It is conjectured that for any $k\geq 0$, $I_k\left(T\right)\sim c_k T\left(\log T\right)^{k^2}$ for some constant $c_k$ that depends on $k$. Keating and Snaith \cite{keatingsnaith}, using random matrix theory, conjectured that for any $k\geq 0$, $c_k=a_kf_k$, where
$$f_k=\lim_{N\to\infty}N^{-k^2}\prod_{j=1}^N \frac{\Gamma\left(j\right)\Gamma\left(j+2k\right)}{\left(\Gamma\left(j+k\right)\right)^2},$$
and
$$a_k=\prod_{p}\left(1-\frac{1}{p}\right)^{k^2}\sum_{m=0}^\infty \left(\frac{\Gamma\left(m+k\right)}{m!\Gamma\left(k\right)}\right)^2\frac{1}{p^m}
,$$
where the product if over primes, and when $k$ is large enough, $c_k=O\left(e^{-k^2\log k}\right)$.

Although asymptotic results for the $2k$-th moment when $k>2$ remain open, some sharp bounds for moments have been proved. For lower bounds, Radziwiłł and Soundararajan \cite{lowerbound1} proved that $I_k\left(T\right)\gg_k T\left(\log T\right)^{k^2}$ when $k>1$. Heap and Soundararajan \cite{lowerbound2} proved that the same lower bound also holds when $0<k<1$. For upper bounds, Heap, Radziwiłł and Soundararajan \cite{sharpupper} proved that $I_k\left(T\right)\ll T\left(\log T\right)^{k^2}$ for $0\leq k\leq 2$ unconditionally. Soundararajan \cite{upperbound} proved that $I_k\left(T\right)\ll_k T\left(\log T\right)^{k^2+\varepsilon}$ for any $k\geq 0$ and any arbitrary $\varepsilon>0$, conditionally on the Riemann Hypothesis. Later, Harper \cite{harper} improved the upper bound to $T\left(\log T\right)^{k^2}$. More precisely, Harper proved that for a fixed $k\geq 0$, when $T$ is large enough, we have
\begin{equation}
\label{e1}
\int_T^{2T} \left|\zeta\left(\frac{1}{2}+it\right)\right|^{2k}dt\leq C\left(k\right) T\left(\log T\right)^{k^2},
\end{equation}
where $C\left(k\right)$ is a constant depending on $k$. Harper also discussed in the paper that $C\left(k\right)= e^{e^{O\left(k\right)}}$. The main focus of this paper is to build on Harper's proof and obtain an explicit constant value of $C\left(k\right)$, by optimizing various steps in the proof. We will prove the following result.

\begin{DL}
\label{theorem1}
Assume the Riemann Hypothesis is true, and let $k\geq 0$ be fixed. For large $T$, we have
$$\int_T^{2T}\left|\zeta\left(\frac{1}{2}+it\right)\right|^{2k}dt\ll e^{e^{18.63k}} T\left(\log T\right)^{k^2}.$$
where the implicit constant is absolute.
\end{DL}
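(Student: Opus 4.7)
The plan is to walk through Harper's proof of \eqref{e1} step by step, replacing each unspecified implied constant by an explicit one and then optimizing the free parameters in $k$.

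First I would invoke Soundararajan's pointwise bound, which under RH gives
$$
\log|\zeta(1/2+it)| \le \mathrm{Re}\sum_{p \le x}\frac{1}{p^{1/2+1/\log x+it}}\cdot\frac{\log(x/p)}{\log x} + \frac{\log T}{\log x} + O(1),
$$
for $x$ an explicit small power of $\log T$. Following Harper, I would then partition the primes in $[2,x]$ into the intervals $I_j = (T^{e^{-j-1}}, T^{e^{-j}}]$ with $0 \le j \le J$, where $J$ is chosen so that $T^{e^{-J}}$ is a fixed small power of $\log T$, and write the prime sum as $S(t) = \sum_{j=0}^J S_j(t)$. Each $t \in [T,2T]$ is then classified by the first index at which $|S_j(t)|$ exceeds a threshold $V_j$ calibrated to $k$ and to $\sigma_j^2 := \sum_{p \in I_j}1/p$. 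Using the mean value theorem for Dirichlet polynomials together with a high-moment bound for $S_j$, I would bound each such measure by an explicit multiple of $T\exp(-V_j^2/(2\sigma_j^2))$.

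These measure bounds then feed into
$$
\int_T^{2T}|\zeta(1/2+it)|^{2k}\,dt \ll \int_{-\infty}^{\infty} 2k\, e^{2kV}\,\m\{t\in[T,2T]:\log|\zeta(1/2+it)|\ge V\}\,dV,
$$
where the good set (every $|S_j|\le V_j$) contributes the main term of order $T(\log T)^{k^2}\prod_{j} e^{2k^2\sigma_j^2}$, while the bad sets sum to a lower-order total once the measure estimates above are combined with the exponential weight $e^{2kV}$. The whole task is then to choose the thresholds $V_j$, the moment exponents $q_j$ used in estimating $\int_T^{2T}|S_j|^{2q_j}$, the initial cut-off $x=(\log T)^{\alpha}$, and the number of levels $J$, so that the product of all the resulting explicit constants is as small as possible.

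The main obstacle will be the two sources of iterated exponential growth. At each level $j$ the admissible moment exponent $q_j$ can be as large as a fixed multiple of $\exp(ck)$, producing factors of order $(2q_j)^{q_j}$; these factors multiply across the $J$ levels and combine with $\prod_j e^{2k^2\sigma_j^2}$ under the $V$-integral. Reducing the constant to $18.63$ requires a careful simultaneous optimization of $c$, of $\alpha$, and of each ratio $V_j/\sigma_j$, balancing the Gaussian tail $e^{-V_j^2/2\sigma_j^2}$ against the weight $e^{2kV_j}$ at every level; tracking every explicit numerical contribution through this balance, and checking that the resulting bound still closes back to the shape $e^{e^{18.63k}}T(\log T)^{k^2}$, is where the bulk of the technical work lies.
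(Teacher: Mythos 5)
Your outline captures the right strategy --- it is Harper's method, which is precisely what the paper implements: Soundararajan's pointwise bound, a decomposition of $[T,2T]$ according to which prime-block Dirichlet polynomial first exceeds its threshold, moment estimates on the good and bad sets, and an optimization over all the free parameters. But as written the proposal is a plan of attack, not a proof, and there are two concrete gaps that separate it from actually producing the constant $18.63$.

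First, you fix the prime-block partition to have ratio $e$ by taking $I_j=(T^{e^{-j-1}},T^{e^{-j}}]$. The paper instead takes $\beta_i=c_1^{i-1}/(\log\log T)^2$ with $c_1>1$ a free parameter, and keeps the exceedance thresholds $\beta_i^{-c_3}$ with a second free parameter $\tfrac12<c_3<1$. This matters a lot: the final exponent turns out to be $c_2$, where $\mathcal I$ is chosen so that $\beta_{\mathcal I}\approx e^{-c_2k}$, and the dominant term in $C(k)$ is $e^{2k/\beta_{\mathcal I}}=e^{O(ke^{c_2k})}$. The binding constraint (from making the bad-set contributions sum geometrically in Lemma~\ref{lemma2}) is $c_2>2b$ with $b=c_1\left(\frac{c_1}{4c_3-2}+1\right)$; minimizing $b$ over $(c_1,c_3)$ gives $c_1\approx1.38$, $c_3\approx0.56$, and $b\approx9.315$, hence $c_2=18.63$. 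If you freeze the ratio at $e$ (i.e.\ $c_1=e$) you lose this degree of freedom and get a strictly worse constant. You would need to recognize that both the block ratio and the threshold exponent must be optimized jointly.

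Second, you have not actually carried out the constant-tracking that is the content of the theorem --- you say yourself that ``tracking every explicit numerical contribution through this balance \dots is where the bulk of the technical work lies.'' In particular, the proposal does not identify the specific inequality $c_2>2b$ that produces $18.63$, does not isolate $e^{2k/\beta_{\mathcal I}}$ as the dominating factor in $C(k)$ (versus $D_1(k),D_2(k)=e^{O(k)}$ and $D_3(k)=O(k^2)$), and does not address the $j=0$ bad set, where the paper applies Cauchy--Schwarz together with an explicit version of Soundararajan's $T(\log T)^{k^2+\varepsilon}$ bound (made quantitative via his large-deviation estimates, giving $D_3(k)=O(k^2)$) to absorb that term. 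Each of these steps is needed to close the argument, and none appears in the proposal beyond the level of intent.
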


That is, the constant $C\left(k\right)$ in (\ref{e1}) satisfies $C\left(k\right)=O\left(e^{e^{18.63k}}\right)$, and the implicit constant here does not depend on $k$.

\section{Proof of the Main Theorem}

Since $\int_{T}^{2T}\left|\zeta\left(\frac{1}{2}+it\right)\right|^{2k}dt=\int_{T}^{2T}e^{2k\log\left|\zeta\left(\frac{1}{2}+it\right)\right|}dt$, it is very helpful if we can obtain an approximation of $\log\left|\zeta\left(\frac{1}{2}+it\right)\right|$. Soundararajan \cite{upperbound} proved the following result.

\begin{MT}
\label{logap}
Assume the Riemann Hypothesis is true, and let $T$ be large. For any $2\leq x\leq T^2$, and any $T\leq t\leq 2T$, we have
$$\log\left|\zeta\left(\frac{1}{2}+it\right)\right|\leq {\Re}\left(\sum_{p\leq x}\frac{1}{p^{\frac{1}{2}+\frac{1}{\log x}+it}}\frac{\log\left(x/p\right)}{\log x}+\sum_{p\leq\min\left(\sqrt{x},\log T\right)}\frac{1/2}{p^{1+2it}}\right)+\frac{\log T}{\log x}+N$$
where $p$ denotes primes, and $N$ is an absolute constant.
\end{MT}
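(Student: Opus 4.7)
The plan is to use a Mellin inversion / explicit formula to rewrite $\log\zeta$ slightly to the right of the critical line as a smoothed truncated prime-power sum plus a sum over zeros, and then to use RH to control the zero contribution and to pass back to $\sigma=\tfrac12$.

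Set $s_0=\sigma_0+it$ with $\sigma_0=\tfrac12+\tfrac1{\log x}$. Starting from the identity $\log(x/n)\,\mathbf{1}_{n<x}=\tfrac{1}{2\pi i}\int_{(c)}(x/n)^w w^{-2}\,dw$ ($c>0$) and summing against $\Lambda(n)/n^s$ yields
$$\sum_{n<x}\frac{\Lambda(n)}{n^s}\cdot\frac{\log(x/n)}{\log x}=\frac{1}{\log x}\cdot\frac{1}{2\pi i}\int_{(c)}\Bigl(-\tfrac{\zeta'}{\zeta}\Bigr)(s+w)\,\frac{x^w}{w^2}\,dw.$$
Shifting the contour past the poles at $w=0$, $w=1-s$, and $w=\rho-s$ (zeros of $\zeta$) produces an explicit formula for $-\zeta'/\zeta(s)$; integrating in $\sigma$ from $\sigma_0$ to $+\infty$ (where $\log\zeta\to 0$) converts it into
$$\log\zeta(s_0)=\sum_{n\le x}\frac{\Lambda(n)}{n^{s_0}\log n}\cdot\frac{\log(x/n)}{\log x}+(\text{zero sum})+O(1).$$
Under RH each $\rho=\tfrac12+i\gamma$ has $|s_0-\rho|\ge 1/\log x$, and the standard zero count $N(U+1)-N(U)\ll\log U$ then bounds the real part of the zero sum by $\ll\log T/\log x$.

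I next split the smoothed series according to the prime-power structure of $n$. The $n=p$ terms give the principal prime sum in the statement. The $n=p^2$ terms (using $\Lambda(p^2)=\log p$ and $\log(p^2)=2\log p$) reduce to $\tfrac12\sum_{p\le\sqrt x}p^{-2s_0}\log(x/p^2)/\log x$; for $p\le\log T$ this agrees with $\tfrac12\sum p^{-1-2it}$ up to a discrepancy of size $\sum_{p\le\log T}\log p/(p\log x)=O(1)$ absorbed into $N$, and for $\log T<p\le\sqrt x$ the tail is itself $O(1)$ by trivial bounds. The remaining higher prime powers $n=p^k$ with $k\ge 3$ contribute $O(1)$ since $\sum_p p^{-k/2}$ converges for $k\ge 3$.

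Finally, to pass from $\sigma_0$ back to $\tfrac12$ I use
$$\log|\zeta(\tfrac12+it)|=\log|\zeta(\sigma_0+it)|-\int_{1/2}^{\sigma_0}\Re\,\tfrac{\zeta'}{\zeta}(\sigma+it)\,d\sigma.$$
By Hadamard's factorization, $\Re\,(\zeta'/\zeta)(\sigma+it)=\sum_\rho(\sigma-\tfrac12)/|\sigma+it-\rho|^2+O(\log T)$; under RH each term in the zero sum is non-negative for $\sigma>\tfrac12$, hence $\Re\,(\zeta'/\zeta)(\sigma+it)\ge -C\log T$, so the integral is $\ge -C\log T/\log x$ and contributes at most $C\log T/\log x$ to the upper bound. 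Combining with the bound on $\log|\zeta(\sigma_0+it)|=\Re\log\zeta(\sigma_0+it)$ yields the Proposition. The main obstacle will be the careful handling of the zero sum $\sum_\rho x^{\rho-s_0}/(\rho-s_0)^2$: both the RH positivity $\Re\,1/(s-\rho)\ge 0$ for $\Re s>\tfrac12$ and a sharp zero-density estimate must cooperate to reduce it to the clean error term $\log T/\log x$. The choice of kernel $x^w/w^2$, producing the weight $\log(x/n)/\log x$, is tuned precisely so that this bookkeeping closes.
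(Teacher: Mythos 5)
The paper does not actually prove Proposition~\ref{logap}: it is quoted verbatim from Soundararajan \cite{upperbound} (his Lemma there), so there is no in-paper proof to compare against. Your plan is the correct one at a structural level and matches Soundararajan's: a Mellin/Selberg-smoothed explicit formula with kernel $x^{w}/w^{2}$, evaluation at $\sigma_0=\tfrac12+\tfrac1{\log x}$, passage from $\sigma_0$ to $\tfrac12$ via the RH positivity of $\sum_\rho \Re\,\tfrac1{s-\rho}$, and decomposition of the von Mangoldt sum into $p$, $p^2$, and $p^{\ge 3}$ pieces. However, two of your estimates as stated do not close.

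First, the zero-sum bound. You assert that RH plus the zero-count $N(U+1)-N(U)\ll\log U$ gives $\Re\sum_\rho x^{\rho-s_0}/(\rho-s_0)^2\ll\log T/\log x$ (with or without the prefactor $\tfrac1{\log x}$). Trivial counting cannot give this: writing $a=1/\log x$, the sum $\sum_\gamma\bigl(a^2+(\gamma-t)^2\bigr)^{-1}$ has typical size $\asymp\tfrac{\log T}{a}=\log T\log x$, so after multiplying by $\tfrac{e^{-1}}{\log x}$ you only get $\ll\log T$, a full factor of $\log x$ worse than what you need. The actual mechanism in Soundararajan's proof is not a zero-density estimate but a pointwise inequality: after also computing the residue at $w=0$ (which produces a $\tfrac1{\log x}(\zeta'/\zeta)'$ term whose real part is $-\tfrac1{\log x}\sum_\rho\Re\,(s_0-\rho)^{-2}+O(1)$), the two zero sums combine to $\tfrac1{\log x}\Re\sum_\rho\tfrac{x^{\rho-s_0}-1}{(\rho-s_0)^2}$, and one shows by an elementary calculus lemma that each summand is $\le C\,\Re\,\tfrac1{s_0-\rho}$ for an explicit constant depending on $\lambda=(\sigma_0-\tfrac12)\log x$. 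The whole zero contribution is then absorbed by the non-negativity of $\sum_\rho\Re\,\tfrac1{s_0-\rho}$, which is what makes the error come out as $\tfrac{1+\lambda}{2}\cdot\tfrac{\log T}{\log x}$ rather than $\log T$. You flag this as the main obstacle at the end, but the fix is of a different nature than ``positivity plus density.''

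Second, the tail $\log T<p\le\sqrt{x}$ of the prime-square sum is not $O(1)$ ``by trivial bounds.'' Trivially $\sum_{\log T<p\le\sqrt x}\tfrac1{2p}$ is of size $\tfrac12\bigl(\log\log\sqrt x-\log\log\log T\bigr)$, which for $x$ as large as $T^2$ is $\asymp\log\log T$. One genuinely needs RH here: e.g.\ the RH estimate $\sum_{p\le z}p^{-1-2it}=\log\zeta(1+2it)+O(1)$ uniformly for $z\ge\log T$ (obtained from the explicit formula together with $\psi(u)=u+O(\sqrt u\,\log^2 u)$), which forces the partial sums to stabilize once $z$ passes $\log T$ and makes the discrepancy between $\sum_{p\le\sqrt x}$ and $\sum_{p\le\min(\sqrt x,\log T)}$ bounded. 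Both gaps are repairable, but in each case the needed input is an RH-specific analytic estimate, not the soft count or trivial bound your plan invokes.
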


The main idea of Harper's proof of (\ref{e1}) is to find a partition of the interval $\left[T,2T\right]$, and compute the integral of $\left|\zeta\left(\frac{1}{2}+it\right)\right|^{2k}$ for each part. In order to partition the interval $\left[T,2T\right]$, we first introduce the following sequence $\left(\beta_j\right)$:

$$\beta_0:=0,\quad\quad \bi:=\frac{{c_1}^{i-1}}{\left(\log\log T\right)^2}\quad  \forall i\geq 1,$$
where $c_1>0$ is an absolute constant, and 
$$\mathcal I=\mathcal I_{k,T}:=1+\max\Set{i:\bi\leq e^{-c_2k}},$$
where $c_2$ is an absolute constant. For each $1\leq i\leq j\leq \mathcal I$, let
$$G_{\left(i,j\right)}\left(t\right)=G_{\left(i,j\right),T}\left(t\right):=\sum_{T^{\beta_{i-1}}<p\leq T^{\bi}}\frac{1}{p^{\frac{1}{2}+\frac{1}{\bj\log T}+it}}\frac{\log\left(T^{\bj}/p\right)}{\log \left(T^{\bj}\right)}.$$
The sets that form the partition are
$$\mathcal T=\mathcal T_{k,T}:=\Set{T\leq t\leq 2T:\left|{\Re}\sum_{T^{\beta_{i-1}}<p\leq T^{\bi}}\frac{1}{p^{\frac{1}{2}+\frac{1}{\bI\log T}+it}}\frac{\log\left(T^{\bI}/p\right)}{\log \left(T^{\bI}\right)}\right|\leq \bi^{-c_3},\ \forall 1\leq i\leq\mathcal I},$$
where $0<c_3<1$ is an absolute constant, and for all $0\leq j\leq\mathcal I-1$,
\begin{align*}
\mathcal S\left(j\right)=\mathcal S_{k,T}\left(j\right):=&\left\{T\leq t\leq 2T:\left|{\Re}G_{i,l}\left(t\right)\right|\leq\bi^{-c_3},\ \forall 1\leq i\leq j,\ \forall i\leq l\leq \mathcal I\right.\\
&\ \left.\text{but } \left|{\Re}G_{\left(j+1,l\right)}\left(t\right)\right|>{\beta_{j+1}}^{-c_3}\text{ for some } j+1\leq l\leq\mathcal I\right\}.
\end{align*}

Based on the above definition, we have
$$\left[T,2T\right]=\mathcal T\cup \bigcup_{j=0}^{\mathcal I-1}S\left(j\right),$$
so we have
\begin{equation}
\label{e2}
\int_T^{2T} \left|\zeta\left(\frac{1}{2}+it\right)\right|^{2k}dt=\int_{t\in \mathcal T}\left|\zeta\left(\frac{1}{2}+it\right)\right|^{2k}dt+\sum_{j=0}^{\mathcal I-1}\int_{t\in\mathcal S\left(j\right)}\left|\zeta\left(\frac{1}{2}+it\right)\right|^{2k}dt.
\end{equation}

For the integral on each individual part, we have the following lemmas.

\begin{YL}
\label{lemma1}
Let $k\geq 1$, and $T\geq e^{e^{\left(10000k\right)^2}}$. Denote $a=c_1^{1-c_3}$. If we have
$$\frac{ka^2}{a-1}e^{c_2k\left(1-c_3\right)}<\frac{1}{4},$$
then
$$\int_{t\in\mathcal T}\exp\left(2k{\Re}\sum_{p\leq T^{\bI}}\frac{1}{p^{\frac{1}{2}+\frac{1}{\bI\log T}+it}}\frac{\log\left(T^{\bI}/p\right)}{\log\left(T^{\bI}\right)}\right)dt\leq C_1 T\left(\log T\right)^{k^2},$$
where $C_1$ is an absolute constant.
\end{YL}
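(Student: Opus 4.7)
The plan is to follow Harper's strategy: decompose the inner prime sum into blocks on the disjoint ranges $(T^{\beta_{i-1}},T^{\bi}]$, replace each resulting exponential factor by a finite Taylor truncation using the defining constraint of $\mathcal T$, and evaluate the resulting polynomial moments via the mean value theorem for Dirichlet polynomials. Writing $\sum_{p\leq T^{\bI}}=\sum_{i=1}^{\mathcal I}\sum_{T^{\beta_{i-1}}<p\leq T^{\bi}}$, the exponent splits as $2k\,\Re\sum_{i=1}^{\mathcal I}G_{(i,\mathcal I)}(t)$ and the integrand factorises as $\prod_{i=1}^{\mathcal I}\exp(2k\,\Re G_{(i,\mathcal I)}(t))$. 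On $\mathcal T$ the definition forces $|\Re G_{(i,\mathcal I)}(t)|\leq\bi^{-c_3}$ for every $i$. Using a standard Taylor remainder bound, I would take $R_i$ to be a suitable absolute multiple of $k\bi^{-c_3}$ and replace each factor by its degree-$R_i$ Taylor truncation, losing only an absolute factor per block. The standard device $e^{2k\,\Re G}=(e^{k\,\Re G})^{2}$, applied with the truncation on $e^{k\,\Re G}$, makes the pointwise upper bound a square of a real Dirichlet polynomial and hence nonnegative, allowing me to enlarge the integration domain from $\mathcal T$ to $[T,2T]$.

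Next, I would expand the product of squared truncated polynomials into a sum of Dirichlet polynomial monomials, each of length at most $T^{2\sum_i R_i\bi}$. The hypothesis $\frac{ka^{2}}{a-1}e^{c_2k(1-c_3)}<\frac{1}{4}$ is the quantitative input needed here: combined with the geometric nature of the $\bi$ (ratio $c_1$) and the bound $\bI\leq c_1 e^{-c_2 k}$, it keeps the relevant error contributions at an absolute constant fraction of the main term and legitimates the mean value theorem for Dirichlet polynomials. Since the blocks $G_{(i,\mathcal I)}$ are supported on disjoint prime ranges, only diagonal terms survive the integration and the computation factorises over $i$; after the standard Gaussian-moment rearrangement, each factor contributes
\[
\int_T^{2T}\!\Bigl(\sum_{r\leq R_i}\frac{(k\Re G_{(i,\mathcal I)}(t))^{r}}{r!}\Bigr)^{\!2}\!dt\;\ll\;T\sum_{s\geq 0}\frac{(k^{2}M_i)^{s}}{s!}\;\leq\;T\,e^{k^{2}M_i},
\]
where $M_i$ is the sum of squared Dirichlet coefficients of $G_{(i,\mathcal I)}(t)$.

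Multiplying over $i$ yields an upper bound $\ll T\,e^{k^{2}M}$ with $M=\sum_{p\leq T^{\bI}}|a_p|^{2}$. A Mertens-type partial summation using the smoothing weight $\log(T^{\bI}/p)/\log(T^{\bI})$ gives $M\leq\log\log T+\log\bI+O(1)$, so the total bound becomes $\ll T(\log T)^{k^{2}}\bI^{k^{2}}e^{O(k^{2})}$. Finally, using $\bI\leq c_1 e^{-c_2 k}$ gives $\bI^{k^{2}}\leq e^{-c_2 k^{3}+k^{2}\log c_1}$, which absorbs every $e^{O(k^{2})}$ and $e^{O(k)}$ loss uniformly in $k\geq 1$ and produces an absolute constant $C_{1}$. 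The main obstacle is the bookkeeping of the truncation and off-diagonal error terms so that every implicit constant is genuinely independent of $k$ and $T$; the hypothesis on $\frac{ka^{2}}{a-1}e^{c_2k(1-c_3)}$ is precisely the quantitative handle that enforces this uniformity, and verifying that it does so without hidden dependence on $k$ is the delicate point.
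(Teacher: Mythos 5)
Your proposal follows essentially the same route as the paper: split the prime sum into the disjoint blocks $G_{(i,\mathcal I)}$, use the defining constraint of $\mathcal T$ to truncate each exponential factor at degree comparable to $100k\bi^{-c_3}$, observe that the squared truncation is nonnegative so the domain can be enlarged to $[T,2T]$, expand and apply the mean value theorem for Dirichlet polynomials, and use the smallness hypothesis on $\frac{ka^2}{a-1}e^{c_2k(1-c_3)}$ to keep the total Dirichlet-polynomial length below $T^{1/2-\delta}$ so the off-diagonal error is acceptable. The only cosmetic difference is that the paper packages the diagonal computation through a multiplicative function $f$ supported on squares and says ``by counting squares,'' whereas you perform the Gaussian-moment estimate directly and conclude with $T e^{k^2 M}$; these are the same computation. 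Your explicit remark that $M \leq \log\log T + \log\bI + O(1)$, so that $\bI^{k^2}\leq c_1^{k^2}e^{-c_2k^3}$ swallows every $e^{O(k^2)}$ loss and yields an absolute $C_1$, is in fact a useful clarification that the paper leaves implicit. One phrasing to tighten: you write that ``each factor contributes $\ll T e^{k^2 M_i}$,'' but of course only one overall factor of $T$ appears from the mean-value theorem, with the diagonal contribution factorising multiplicatively over blocks; the intended meaning is clear but the statement as written would multiply to $T^{\mathcal I}$.
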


\begin{YL}
\label{lemma2}
Let $k\geq 1$, and $T\geq e^{e^{\left(10000k\right)^2}}$. Then
$$\m\left(\mathcal S\left(0\right)\right)\leq M Te^{-2\left(\log\log T\right)^2/c_1}$$
for some absolute constant $M$. Assume
$$\log c_1<\frac{c_1}{2},\quad \frac{c_2}{c_1\left(\frac{c_1}{4c_3-2}+1\right)}>2,$$
and $\forall 1\leq j\leq \mathcal I-1$, we have 
$$\mathcal I-j\leq\frac{1/\bj}{\log c_1}.$$
Let $c_4=\frac{c_1}{4c_3-2}+1$, then we have
$$\int_{t\in\mathcal S\left(j\right)}\exp\left(2k{\Re}\sum_{p\leq T^{\bj}}\frac{1}{p^{\frac{1}{2}+\frac{1}{\bj\log T}+it}}\frac{\log\left(T^{\bj}/p\right)}{\log\left(T^{\bj}\right)}\right)dt\leq C_2 e^{-\beta_{j+1}^{-1}\log\left(1/\beta_{j+1}\right)/c_4}T\left(\log T\right)^{k^2},$$
where $C_2$ is an absolute constant.
\end{YL}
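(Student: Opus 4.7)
The strategy is to treat the two assertions of the lemma separately, both via Chebyshev's inequality combined with a mean-value theorem for short Dirichlet polynomials (Harper's key proposition applied to primes in the block $(T^{\beta_{i-1}}, T^{\beta_i}]$). For the measure bound on $\mathcal{S}(0)$, I would use
$$\mathcal{S}(0) \;\subseteq\; \bigcup_{l=1}^{\mathcal{I}} \bigl\{t \in [T,2T] : |\Re G_{(1,l)}(t)| > \beta_1^{-c_3}\bigr\}$$
and apply Chebyshev with the $2q$-th moment. Since $G_{(1,l)}$ is supported on primes $p\leq T^{\beta_1}$, the mean-value theorem gives
$$\int_T^{2T} |\Re G_{(1,l)}(t)|^{2q}\,dt \ \ll\ T\cdot q!\cdot \Bigl(\sum_{p\leq T^{\beta_1}}\tfrac{1}{p}\Bigr)^q \ \ll\ T\cdot q!\cdot (\log\log T)^q,$$
provided $q \leq 1/(2\beta_1) = (\log\log T)^2/2$. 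Choosing $q$ near this maximum, applying Stirling, and using $\beta_1 = 1/(\log\log T)^2$ together with the geometric structure $\beta_i = c_1^{i-1}\beta_1$ produces the stated $M T e^{-2(\log\log T)^2/c_1}$ bound.

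For the integral bound on $\mathcal{S}(j)$ with $j \geq 1$, I would separate the exponential from the indicator of the bad event by Hölder's inequality with exponents $c_4$ and $c_4/(c_4-1)$:
$$\int_{\mathcal{S}(j)} \exp\bigl(2k\Re F_j(t)\bigr)\,dt \ \leq\ \Bigl(\int_T^{2T}\exp\bigl(2k c_4 \Re F_j(t)\bigr)\,dt\Bigr)^{1/c_4} \m(\mathcal{S}(j))^{1-1/c_4},$$
where $F_j(t)$ denotes the partial sum over $p\leq T^{\bj}$ appearing in the statement. The first factor is estimated by Taylor expanding $\exp(\cdot)$ to order $\asymp 1/\bj$, bounding each term by the mean-value theorem as above, and recognizing the resulting sum as the expected main term of size $T(\log T)^{c_4 k^2}$; its $1/c_4$-th root contributes $T^{1/c_4}(\log T)^{k^2}$. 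The measure $\m(\mathcal{S}(j))$ is bounded by the same Chebyshev--plus--moment argument as $\mathcal{S}(0)$, but now applied to $G_{(j+1,l)}$ for $j+1\leq l\leq \mathcal{I}$: one uses $\|G_{(j+1,l)}\|_2^2 \ll \sum_{T^{\bj}<p\leq T^{\beta_{j+1}}}\tfrac{1}{p} \ll \log c_1$ and takes the moment order $Q$ near $1/(2\beta_{j+1})$. Stirling then produces a decay of the form $\exp(-\beta_{j+1}^{-1}\log(1/\beta_{j+1}) \cdot \mathrm{const})$, and the hypothesis $\mathcal{I} - j \leq (1/\bj)/\log c_1$ absorbs the union bound over $l$ without loss.

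The main technical obstacle is to tune $Q$, the Chebyshev threshold exponent $c_3$, and the Hölder exponent $c_4$ so that the raised measure factor $\m(\mathcal{S}(j))^{1-1/c_4}$ combined with the $T^{1/c_4}$ from the Hölder split reassembles into $T(\log T)^{k^2}$ multiplied by precisely $\exp(-\beta_{j+1}^{-1}\log(1/\beta_{j+1})/c_4)$. Matching the Chebyshev exponent to $c_4$ forces exactly the relation $c_4 = c_1/(4c_3-2) + 1$ stated in the hypothesis. The side conditions $\log c_1 < c_1/2$ and $c_2/(c_1 c_4) > 2$ arise to ensure, respectively, that the $\log c_1$ factor in $\|G_{(j+1,l)}\|_2^2$ is dominated by the gain from optimizing the moment order, and that the maximal admissible $Q \asymp 1/\beta_{j+1}$ comfortably exceeds $k^2$ throughout the range of $j$ permitted by the definition of $\mathcal{I}$ (the bound $\beta_{\mathcal{I}-1}\leq e^{-c_2 k}$ being the key input here). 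Once these balances are carried out, the two bounds are combined and the stated inequality follows.
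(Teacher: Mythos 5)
Your treatment of $\m(\mathcal S(0))$ — Chebyshev with a high moment of $\Re G_{(1,l)}$, mean-value theorem for the short Dirichlet polynomial, union bound over $l$ — is essentially the paper's argument for the $j=0$ case and is sound in outline. The $j\geq 1$ case is where you diverge, and the H\"older split as written does not close.

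The obstacle is quantitative. When you write
$$\int_{\mathcal{S}(j)} \exp\bigl(2k\Re F_j\bigr)\,dt \ \leq\ \Bigl(\int_T^{2T}\exp\bigl(2k c_4 \Re F_j\bigr)\,dt\Bigr)^{1/c_4} \m(\mathcal{S}(j))^{1-1/c_4},$$
the first factor, if it could be evaluated at all, would have size $\asymp T(\log T)^{(kc_4)^2}=T(\log T)^{c_4^2 k^2}$, not $T(\log T)^{c_4 k^2}$ as you assert: replacing $2k$ by $2kc_4$ squares in the Gaussian heuristic (variance $\tfrac12\sum_{p\le T^{\bj}}p^{-1}$ is unchanged but the tilt doubles to $2kc_4$). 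Taking the $1/c_4$-th root then yields $T^{1/c_4}(\log T)^{c_4 k^2}$, an excess of $(\log T)^{(c_4-1)k^2}$ over the target. Since $c_4=\tfrac{c_1}{4c_3-2}+1$ is a fixed constant strictly greater than $1$ (indeed $\approx 6.75$ at the optimal parameters), this excess is a genuine positive power of $\log T$. The second factor $\m(\mathcal S(j))^{1-1/c_4}$ is $T^{1-1/c_4}$ times a quantity of size $e^{-cE_j}$ with $E_j\asymp\beta_{j+1}^{-1}\log(1/\beta_{j+1})$; this decay is large in $k$ but independent of $T$, so it cannot absorb a growing power of $\log T$. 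In short, H\"older costs you $(\log T)^{(c_4-1)k^2}$ and nothing in the measure side refunds it. There is also a secondary issue: the integral $\int_T^{2T}\exp(2kc_4\Re F_j)\,dt$ is over all of $[T,2T]$, where you no longer have the pointwise bounds $|\Re G_{(i,j)}(t)|\le\beta_i^{-c_3}$ that hold on $\mathcal S(j)$; without those, truncating the Taylor series at order $\asymp 1/\bj$ is not a valid approximation of the exponential, and the mean-value theorem cannot be applied to the untruncated series because the Dirichlet polynomial length would exceed $T^{1/2}$.

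The paper avoids both problems by not separating the exponential from the bad-event indicator. One bounds the indicator of $\mathcal S(j)$ pointwise by $\sum_{l=j+1}^{\mathcal I}\bigl(\beta_{j+1}^{c_3}\,\Re G_{(j+1,l)}(t)\bigr)^{2Q}$ with $Q\asymp 1/(c_1\beta_{j+1})$, multiplies this by the truncated square expansion of $\exp(2k\Re F_j(t))$ (which is valid because the block bounds hold on $\mathcal S(j)$), and applies the mean-value theorem to the whole product. A single application then produces the $(\log T)^{k^2}$ factor exactly from the exponential part and the decay $e^{-\beta_{j+1}^{-1}\log(1/\beta_{j+1})/c_4}$ from the indicator part, with the union over $l$ contributing the $\mathcal I-j$ factor that your hypothesis $\mathcal I-j\le (1/\bj)/\log c_1$ controls. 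If you want to keep a H\"older-flavoured presentation, you would have to split $\exp(2k\Re F_j)\cdot\mathbf{1}_{\mathcal S(j)}$ differently (for example, leave the exponential at exponent $2k$ and raise only the indicator), which ultimately reduces to the same combined moment computation.
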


\begin{YL}
\label{lemma3}
Let $k\geq 1$ and $T\geq e^{e^{\left(10000k\right)^2}}$. If the same assumptions in Lemma \ref{lemma1} are true, then
$$\int_{t\in\mathcal T}\exp\left(2k{\Re}\sum_{p\leq T^{\bI}}\frac{1}{p^{\frac{1}{2}+\frac{1}{\bI\log T}+it}}\frac{\log\left(T^{\bI}/p\right)}{\log\left(T^{\bI}\right)}+\sum_{p\leq \log T}\frac{1/2}{p^{1+2it}}\right)dt\leq D_1\left(k\right) T\left(\log T\right)^{k^2},$$
where $D_1\left(k\right)$ is a constant depending on $k$.
\end{YL}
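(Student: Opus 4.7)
The strategy is to follow the polynomial truncation and mean-square argument used to prove Lemma \ref{lemma1}, absorbing the additional exponent $B(t):=\Re\sum_{p\leq\log T}\frac{1}{2p^{1+2it}}$ into the block of smallest primes. The key observation is that, since $\beta_1=1/(\log\log T)^2$, for $T$ sufficiently large every prime $p\leq\log T$ satisfies $\log p\leq\log\log T\leq\beta_1\log T$, so $p\leq T^{\beta_1}$; hence $B(t)$ is supported on primes already in the first block $(1,T^{\beta_1}]$. Using the block decomposition $2kA(t)=\sum_{i=1}^{\mathcal I}2k\Re G_{i,\mathcal I}(t)$, I would write
\[
\exp\bigl(2kA(t)+B(t)\bigr)=\exp\bigl(2k\Re G_{1,\mathcal I}(t)+B(t)\bigr)\prod_{i=2}^{\mathcal I}\exp\bigl(2k\Re G_{i,\mathcal I}(t)\bigr)
\]
and handle all but the first factor exactly as in Lemma \ref{lemma1}.

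For the modified first factor, the additional exponent is harmless on $\mathcal T$: the definition of $\mathcal T$ forces $|2k\Re G_{1,\mathcal I}(t)|\leq 2k\beta_1^{-c_3}=2k(\log\log T)^{2c_3}$, whereas Mertens' theorem yields $|B(t)|\leq\tfrac12\log\log\log T+O(1)$, of strictly smaller order. Hence $|2k\Re G_{1,\mathcal I}(t)+B(t)|\leq (2k+1)\beta_1^{-c_3}$ on $\mathcal T$ for $T$ large, and the Taylor polynomial truncation parameter $N_1$ used in Lemma \ref{lemma1} remains adequate after an absolute-constant inflation.

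I would then rerun Lemma \ref{lemma1}'s mean-square calculation verbatim, replacing the truncation of $\exp(2k\Re G_{1,\mathcal I})$ by that of $\exp(2k\Re G_{1,\mathcal I}+B)$. Using the identity $\exp(2k\Re w+\Re z)=|\exp(kw+z/2)|^2$ applied to the underlying complex Dirichlet sums $w=A_0$ and $z=B_0$ whose real parts are $A$ and $B$, the integrand becomes the squared modulus of a Dirichlet polynomial supported on the frequencies $p^{it}$ (for $p\leq T^{\beta_{\mathcal I}}$, from $e^{kA_0}$) and $p^{2it}$ (for $p\leq\log T$, from $e^{B_0/2}$). Integrating $|\cdot|^2$ over $[T,2T]$ reduces by near-orthogonality to a diagonal matching of Dirichlet coefficients; beyond the contributions already handled in Lemma \ref{lemma1} (which yield $T(\log T)^{k^2}$), the only new ingredient is a convergent Euler factor $\prod_{p\leq\log T}(1+O(1/p^2))$ coming from the small primes where $B$ overlaps the first block.

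The main obstacle is managing this overlap in the diagonal computation: for primes $p\leq\log T$ the diagonal condition becomes $a_p+2b_p=a_p'+2b_p'$ (with $a$'s coming from $A_0$ and $b$'s from $B_0/2$), which does not decouple as cleanly as in Lemma \ref{lemma1}, where only $a_p=a_p'$ appears. One must unpack this condition prime by prime to verify that the resulting Euler factor really does factorise as the Lemma \ref{lemma1} contribution times the extra $(1+O(1/p^2))$ factor above. Once this bookkeeping is carried out, the desired bound $\int_{\mathcal T}\exp(2kA+B)\,dt\leq D_1(k)T(\log T)^{k^2}$ follows, with $D_1(k)$ depending on $k$ only through the (essentially unchanged) polynomial truncation parameters.
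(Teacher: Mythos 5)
The paper does not actually prove Lemma \ref{lemma3}; in the proof of Theorem \ref{theorem1} it simply cites Section 6 of Harper's paper for the bound $D_1(k)=e^{O(k)}$. Your strategy — absorb $B(t):=\Re\sum_{p\leq\log T}\frac{1/2}{p^{1+2it}}$ into the first block (justified since $p\leq\log T$ implies $p^2\leq T^{\beta_1}$ for large $T$), truncate as in Lemma \ref{lemma1}, and rerun the mean-square calculation — is indeed the same plan that underlies Harper's argument, and your size comparison $|B(t)|=O(\log\log\log T)\ll\beta_1^{-c_3}$ on $\mathcal T$ is the correct reason the truncation parameter survives unchanged up to constants.

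The genuine gap is the one you flag yourself: the diagonal matching condition $a_p+2b_p=a_p'+2b_p'$ at primes $p\leq\log T$ is identified but not resolved, and ``once this bookkeeping is carried out'' postpones the entire technical content of the lemma. To close it you would need to verify, prime by prime, that the local Euler factor is the Lemma \ref{lemma1} factor times $\bigl(1+O\bigl((1+k^2)/p^2\bigr)\bigr)$, where the $k$-dependence is unavoidable because configurations such as $(a_p,a_p',b_p,b_p')=(2,0,0,1)$ carry a weight proportional to $k^2/p^2$; your asserted factor $\bigl(1+O(1/p^2)\bigr)$ suggests $k$-uniformity, which is not quite right (though harmless here since $D_1(k)$ may depend on $k$). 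A secondary issue: the identity $\exp(2k\Re w+\Re z)=|\exp(kw+z/2)|^2$ is true, but a literal Taylor truncation of the \emph{complex} function $\exp(kw+z/2)$ has error controlled by $|kw+z/2|$, which is \emph{not} bounded on $\mathcal T$ — only the real part is. The proof of Lemma \ref{lemma1} truncates $e^{k\Re F_i}$ as a polynomial in the \emph{real} quantity $\Re F_i$ and then squares; your argument must do the same with $k\Re G_{1,\mathcal I}+\tfrac12\Re B_0$, after which the resulting real polynomial, expanded in cosines, does become the Dirichlet polynomial you want to square.
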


\begin{YL}
\label{lemma4}
Let $k\geq 1$ and $T\geq e^{e^{\left(10000k\right)^2}}$. If the same assumptions in Lemma \ref{lemma2} are true, then for $1\leq j\leq\mathcal I-1$, we have
\begin{align*}
\int_{t\in\mathcal S\left(j\right)}\exp\left(2k{\Re}\sum_{p\leq T^{\bj}}\frac{1}{p^{\frac{1}{2}+\frac{1}{\bj\log T}+it}}\frac{\log\left(T^{\bj}/p\right)}{\log\left(T^{\bj}\right)}+\sum_{p\leq \log T}\frac{1/2}{p^{1+2it}}\right)dt\\
\leq D_2\left(k\right) e^{-\beta_{j+1}^{-1}\log\left(1/\beta_{j+1}\right)/c_4}T\left(\log T\right)^{k^2},
\end{align*}
where $D_2\left(k\right)$ is a constant depending on $k$.
\end{YL}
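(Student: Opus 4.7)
The plan is to reduce Lemma \ref{lemma4} to Lemma \ref{lemma2} by controlling the extra exponential factor corresponding to the short sum over $p\leq \log T$. Set
\[
H(t):={\Re}\sum_{p\leq \log T}\frac{1/2}{p^{1+2it}}.
\]
Then the integrand of Lemma \ref{lemma4} equals the integrand of Lemma \ref{lemma2} multiplied by $e^{H(t)}$, so the entire task reduces to bounding this extra factor.

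First I would establish a pointwise upper bound on $H$. Since $|p^{-(1+2it)}|=1/p$, Mertens' theorem gives
\[
|H(t)|\leq \sum_{p\leq \log T}\frac{1}{2p}\leq \tfrac{1}{2}\log\log\log T+O(1),
\]
so $e^{H(t)}\ll (\log\log T)^{1/2}$ uniformly for $t\in [T,2T]$. Pulling this pointwise bound outside the integral and invoking Lemma \ref{lemma2} on the remaining integral of $\exp(2k{\Re}\sum_{p\leq T^{\bj}}(\cdots))$ immediately produces a bound of the announced form, with $D_2(k)$ absorbing $C_2$ together with the slowly-growing factor $(\log\log T)^{1/2}$.

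I expect the main subtlety to be essentially cosmetic: the factor $(\log\log T)^{1/2}$ is not literally bounded in $T$, so calling $D_2(k)$ a ``constant depending on $k$'' is a mild abuse of notation. This is justified because $(\log\log T)^{1/2}=(\log T)^{o(1)}$, so the factor can be absorbed, at the end of the proof of Theorem \ref{theorem1}, into the overall implicit constant without altering the exponent $k^2$ of $\log T$. A Cauchy--Schwarz separation of the two exponentials is tempting for eliminating even this mild loss, but applying Lemma \ref{lemma2} to $\exp(4k{\Re}(\cdot))$ yields $(\log T)^{4k^2}$, and the square root still leaves $(\log T)^{2k^2}$---the wrong exponent---so the pointwise bound on $H$ seems essentially forced. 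A potential refinement would be to re-run the Dirichlet-polynomial proof of Lemma \ref{lemma2} from scratch, absorbing the short Taylor expansion of $e^H$ (legitimate because $|H|=O(\log\log\log T)$) into the polynomial approximation of $\exp(2k{\Re}(\cdot))$; since $e^H$ lives on primes $\leq\log T$, this enlargement of the polynomial's support is completely negligible against the existing length bound used in Lemma \ref{lemma2}, and the Euler-product factor $\prod_{p\leq \log T}\exp(1/p)$ contributes only the same $O(\log\log T)^{1/2}$ to $D_2(k)$.
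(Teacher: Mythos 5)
Your primary approach has a genuine gap that you already half-notice but then dismiss too quickly: the pointwise bound $|H(t)|\le\sum_{p\le\log T}\tfrac{1}{2p}=\tfrac12\log_3 T+O(1)$ gives $e^{2kH(t)}\ll(\log\log T)^{O(k)}$, and this factor is unbounded as $T\to\infty$, so it simply cannot be absorbed into a constant $D_2(k)$. The lemma is stated with $D_2(k)$ independent of $T$, and that independence is not cosmetic; it is used when the paper sums the $\mathcal S(j)$ contributions and tracks the $k$-dependence to conclude $C(k)=e^{e^{O(k)}}$. Absorbing $(\log\log T)^{O(k)}$ ``into the overall implicit constant at the end'' would quietly change the theorem from a genuine $T(\log T)^{k^2}$ bound to a $T(\log T)^{k^2+o(1)}$ bound, i.e.\ it would recover Soundararajan's result rather than Harper's.

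The fix is exactly what you sketch in your last sentence, but your accounting there is also off. When the factor $\exp\bigl(2k\Re\sum_{p\le\log T}\tfrac{1/2}{p^{1+2it}}\bigr)$ is carried into the Dirichlet-polynomial mean value rather than bounded pointwise, the primes $p\le\log T$ enter with weight $p^{-1}$, not $p^{-1/2}$. In the diagonal (square) terms that dominate the mean value, these primes therefore contribute an Euler factor like $\prod_{p\le\log T}\bigl(1+O(k^2/p^2)\bigr)$, not $\prod_{p\le\log T}\exp(O(1/p))$ as you wrote. The former is $e^{O(k^2)}$, bounded uniformly in $T$, which is precisely why $D_2(k)$ is a bona fide constant. (This is Harper's argument; the paper itself defers the proofs of Lemmas \ref{lemma3} and \ref{lemma4} to \cite{harper}, only sketching Lemmas \ref{lemma1} and \ref{lemma2}.) So the idea you list as an optional ``refinement'' is in fact the whole proof, and the reason it works is the $1/p^2$ decay in the diagonal, which you misidentified as $1/p$.
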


Now we prove the main theorem. 

\begin{proof}
[Proof of Theorem \ref{theorem1}] Let $x=T^{\bI}$ in Proposition \ref{logap}, so by Lemma \ref{lemma3} we have
$$\int_{t\in\mathcal T}\left|\zeta\left(\frac{1}{2}+it\right)\right|^{2k}dt\leq e^{2k/\bI+2kN}D_1\left(k\right)T\left(\log T\right)^{k^2}.$$

Let $x=T^{\bj}$ in Proposition \ref{logap}, so by Lemma \ref{lemma4} we get, for all $1\leq j\leq\mathcal I-1$,
$$\int_{t\in\mathcal S\left(j\right)}\left|\zeta\left(\frac{1}{2}+it\right)\right|^{2k}dt\leq e^{2k/\bj+2kN}D_2\left(k\right) e^{-\beta_{j+1}^{-1}\log\left(1/\beta_{j+1}\right)/c_4}T\left(\log T\right)^{k^2}.$$
By the restriction of $c_4$ in Lemma \ref{lemma2}, using geometric series, we have for $1\leq j\leq\mathcal I-1$
$$\sum_{j=1}^{\mathcal I-1}e^{2k/\bj}e^{-\beta_{j+1}^{-1}\log\left(1/\beta_{j+1}\right)/c_4}\leq 1.$$
Then we have
$$\sum_{j=1}^{\mathcal I-1}\int_{t\in\mathcal S\left(j\right)}\left|\zeta\left(\frac{1}{2}+it\right)\right|^{2k}dt\leq \mathcal I e^{2kN}D_2\left(k\right)T\left(\log T\right)^{k^2}.$$
When $j=0$, we have
$$\int_{t\in\mathcal S\left(0\right)}\left|\zeta\left(\frac{1}{2}+it\right)\right|^{2k}dt\leq\sqrt{\m\left(\mathcal S\left(0\right)\right)\int_{T}^{2T}\left|\zeta\left(\frac{1}{2}+it\right)\right|^{4k}dt}.$$
By Lemma 2 in \cite{upperbound}, we know for any $k\geq 1$ and $s>0$, there exists a constant $D_3\left(k,s\right)$ such that
$$\int_T^{2T} \left|\zeta\left(\frac{1}{2}+it\right)\right|^{2k}dt\leq D_3\left(k,s\right) T\left(\log T\right)^{k^2+s}.$$
Choose $s=64k^2$, then by Lemma \ref{lemma2}, when $c_1<100$, we have
\begin{align*}
\int_{t\in\mathcal S\left(0\right)}\left|\zeta\left(\frac{1}{2}+it\right)\right|^{2k}dt&\leq \sqrt{D_3\left(k,s\right)M} T \sqrt{\left(\log T\right)^{65k^2-2\log\log T/{c_1}}}\\
&\leq\sqrt{D_3\left(k,64k^2\right)M} T\left(\log T\right)^{k^2}.
\end{align*}
Let $D_3\left(k\right)=D_3\left(k,64k^2\right)$, then we have
$$\int_{t\in\mathcal S\left(0\right)}\left|\zeta\left(\frac{1}{2}+it\right)\right|^{2k}dt\leq\sqrt{D_3\left(k\right)M}T\left(\log T\right)^{k^2}.$$
Putting everything together, we have
$$\int_T^{2T} \left|\zeta\left(\frac{1}{2}+it\right)\right|^{2k}dt\leq \left(e^{2k/\bI+2kN}D_1\left(k\right)+\mathcal I e^{2kN}D_2\left(k\right)+\sqrt{D_3\left(k\right)M}\right)T\left(\log T\right)^{k^2}.$$
So, if we let 
$$C\left(k\right)=e^{2k/\bI+2kN}D_1\left(k\right)+\mathcal I e^{2kN}D_2\left(k\right)+\sqrt{D_3\left(k\right)M},$$
then (\ref{e1}) is proved.

Harper proved in section 6 of \cite{harper} that $D_1\left(k\right)$ and $D_2\left(k\right)$ are both of size $e^{O\left(k\right)}$. We have $D_3\left(k\right)=O\left(k\right)$ by (\ref{d3}). From the definition of $\bI$ we know that $\bI\leq c_1e^{-c_2k}=O\left(e^{-c_2k}\right)$, hence $e^{2k/\bI}=O\left(e^{e^{c_2k}}\right)$, which becomes the dominating term in $C\left(k\right)$. Hence we can conclude that $C\left(k\right)=O\left(e^{e^{c_2k}}\right)$, which means that the size of the implicit constant depends entirely on the choice of $c_2$. We will determine the choice of $c_2$ in Section 4 to finish the proof.
\end{proof}

Now we prove that $D_3=O\left(k^2\right)$. We first introduce the following theorem from \cite{upperbound}.

\begin{DL}
\label{theorem2}
Assume the Riemann Hypothesis is true, let $T$ be large, and $V\geq 3$. Define $S\left(T,V\right):=\Set{T\leq t\leq 2T|\log\left|\zeta\left(\frac{1}{2}+it\right)\right|\geq V}$. Denote $\log\log\log T$ as $\log_3 T$. If $10\sqrt{\log\log T}\leq V\leq \log\log T$, then there exists $b_1>0$ such that
$$\m\left(S\left(T,V\right)\right)\leq b_1\cdot T\frac{V}{\sqrt{\log\log T}}\exp\left(-\frac{V^2}{\log\log T}\left(1-\frac{4}{\log_3 T}\right)\right).$$
If $\log\log T<V\leq\frac{1}{2}\log\log T\log_3 T$, then there exists $b_2>0$ such that
$$\m\left(S\left(T,V\right)\right)\leq b_2\cdot T\frac{V}{\sqrt{\log\log T}}\exp\left(-\frac{V^2}{\log\log T}\left(1-\frac{7V}{4\log\log T\log_3 T}\right)^2\right).$$
If $\frac{1}{2}\log\log T\log_3 T<V$, then there exists $b_3>0$ such that
$$\m\left(S\left(T,V\right)\right)\leq b_3\cdot T\exp\left(-\frac{1}{33} V\log V\right).$$
\end{DL}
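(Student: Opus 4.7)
The plan is to follow the standard Soundararajan strategy: use Proposition \ref{logap} to reduce the event $\log|\zeta(1/2+it)|\geq V$ to a lower bound on a short prime Dirichlet polynomial, then bound the high moments of that polynomial and apply Markov's inequality with carefully chosen parameters.

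Setting $P_x(t):={\Re}\sum_{p\leq x}\frac{1}{p^{1/2+1/\log x+it}}\frac{\log(x/p)}{\log x}$ and noting that the secondary prime-squares sum in Proposition \ref{logap} is $O(\log_3 T)$ uniformly in $t$, the event $\log|\zeta(1/2+it)|\geq V$ forces $P_x(t)\geq V-\log T/\log x - O(\log_3 T)$ for any $2\leq x\leq T^2$. I would next compute the $2q$-th moment by expanding $P_x(t)^{2q}$ and invoking the mean value theorem for Dirichlet polynomials: provided $x^{2q}\leq T^{1-\varepsilon}$, one obtains
$$\int_T^{2T} P_x(t)^{2q}\,dt \;\leq\; T\cdot\frac{(2q)!}{2^q\,q!}\Bigl(\sum_{p\leq x}\frac{(\log(x/p))^2}{p\,\log^2 x}\Bigr)^q + O(x^{2q}),$$
which by Stirling and Mertens is at most $T(Cq\log\log x)^q$ for an absolute constant $C$. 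Markov's inequality then yields
$$\m(S(T,V)) \;\leq\; \frac{T(Cq\log\log x)^q}{\bigl(V-\log T/\log x - O(\log_3 T)\bigr)^{2q}}.$$

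It remains to optimise $x$ and $q$ in each regime. In the first range $10\sqrt{\log\log T}\leq V\leq \log\log T$, I would take $x=T^{A/V}$ with $A$ a large absolute constant and $q=\lfloor V^2/\log\log T\rfloor$; then $\log\log x=\log\log T+O(\log_3 T)$ and $\log T/\log x=V/A$ is only a small fraction of $V$, so after Stirling the bound collapses to the claimed $\exp(-(V^2/\log\log T)(1-4/\log_3 T))$ once the error terms are quantified. In the second range the admissibility constraint $x^{2q}\leq T^{1-\varepsilon}$ becomes binding; choosing $x$ and $q$ so that $x^{2q}$ just barely satisfies it, with $\log T/\log x$ now a non-negligible fraction $\asymp V^2/(\log\log T\log_3 T)$ of $V$, produces the weaker exponent $(1-7V/(4\log\log T\log_3 T))^2$. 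In the third range $V>\frac{1}{2}\log\log T\log_3 T$ the Gaussian-type bound breaks down: here I would choose $x$ much smaller, roughly $x=T^{c/V}$, take $q\asymp V/\log V$, and use the crude estimate $\sum_{p\leq x}1/p\leq\log\log x+O(1)$; optimising $(Cq\log\log x)^q/V^{2q}$ then delivers the Selberg-type exponent $-V\log V/33$ with an absolute constant.

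The main obstacle is the delicate bookkeeping of all the error terms — the $\log T/\log x$ shift, the $O(\log_3 T)$ from prime squares, the secondary $O(x^{2q})$ term in the mean value theorem, and the deviation of $\sum_{p\leq x}1/p$ from $\log\log x$ — so that the sharp refinements $1-4/\log_3 T$ and $(1-7V/(4\log\log T\log_3 T))^2$ actually appear in the exponent rather than being absorbed into generic $(1-o(1))$ factors. This forces one to use Mertens in its effective form $\sum_{p\leq x}1/p=\log\log x+M+O(1/\log x)$ and to pick $q$ and $x$ in a coordinated way across the three ranges of $V$.
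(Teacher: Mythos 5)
The paper does not prove Theorem~\ref{theorem2}; it is quoted verbatim from Soundararajan's paper \cite{upperbound}, where the proof is given. So the relevant comparison for your proposal is to Soundararajan's original argument.

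Your outline has the right overall shape (reduce to a prime Dirichlet polynomial via Proposition~\ref{logap}, take high moments, apply Markov), but it omits the one structural device that makes Soundararajan's proof work, and as written your parameter choices are inconsistent with your own admissibility constraint. With $x=T^{A/V}$ and $q\approx V^2/\log\log T$, the requirement $x^{2q}\ll T$ reads $2AV/\log\log T\le 1$, which already fails near the top of the first range $V\approx\log\log T$ for $A$ a large absolute constant; it fails far more badly in the second range, where $V$ can be as large as $\tfrac12\log\log T\log_3 T$. The fix — and the core idea of Soundararajan's proof — is to split the prime sum at $z:=x^{1/\log\log T}$ into a long piece $S_1$ over $p\le z$ and a short tail $S_2$ over $z<p\le x$, and to observe that $\log|\zeta(\tfrac12+it)|\ge V$ forces either $S_1\ge V(1-7/(8A))$ or $S_2\ge V/(8A)$. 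The moment computation is then done for $S_1$, whose length $z$ is so small that the admissibility condition $z^{2q}\ll T$ tolerates $q$ as large as $\asymp V\log\log T/A$, far beyond what $x^{2q}\ll T$ permits; the tail $S_2$ is dispatched with a much smaller moment. Soundararajan also lets $A$ depend on $T$, roughly $A\asymp\log_3 T$ in the first range, $A\asymp\log\log T\,\log_3 T/V$ in the second, and $A$ constant in the third, and it is precisely this coupling of $A$, $z$ and $q$ that produces the refined factors $\left(1-\tfrac{4}{\log_3 T}\right)$ and $\left(1-\tfrac{7V}{4\log\log T\log_3 T}\right)^2$. Without the $z$-splitting the Markov step does not close, so this is a genuine missing idea rather than bookkeeping you can defer.
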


To find $D_3\left(k\right)$, firstly note that
$$\int_T^{2T} \left|\zeta\left(\frac{1}{2}+it\right)\right|^{2k}dt=-\int_{-\infty}^\infty e^{2kV}d\m\left(S\left(T,V\right)\right)=2k\int_{-\infty}^\infty e^{2kV}\m\left(S\left(T,V\right)\right)dV.$$

We can separate the integral $\int_{-\infty}^\infty e^{2kV}\m\left(S\left(T,V\right)\right)dV$ into four parts depending on the size of $V$. For simplicity, we denote $T_1=10\sqrt{\log\log T}$, $T_2=\log\log T$, $T_3=\frac{1}{2}\log\log T\log_3 T$. Since $e^{2kV}\m\left(S\left(T,V\right)\right)$ is non-negative, we have
\begin{align*}
\int_{-\infty}^\infty e^{2kV}\m\left(S\left(T,V\right)\right)dV=&\int_{-\infty}^{T_1} e^{2kV}\m\left(S\left(T,V\right)\right)dV+\int_{T_1}^{T_2}e^{2kV}\m\left(S\left(T,V\right)\right)dV\\
&+\int_{T_2}^{T_3}e^{2kV}\m\left(S\left(T,V\right)\right)dV+\int_{T_3}^{\infty}e^{2kV}\m\left(S\left(T,V\right)\right)dV.
\end{align*}

For the first integral, we have
$$\int_{-\infty}^{T_1} e^{2kV}\m\left(S\left(T,V\right)\right)dV\leq \int_{-\infty}^{T_1} e^{2kV} TdV=\frac{T}{2T_1} e^{2T_1k}.$$

When $T\geq e^{e^{\left(10000k\right)^2}}$, $T\geq e^{e^{400}}$, so we have $10\sqrt{\log\log T}\leq \log\log T/2$, then
$$\frac{T}{2T_1}e^{2T_1k}\leq Te^{\log\log Tk}=T\left(\log T\right)^k.$$

\newcommand{\erfc}{{\rm erfc}}
\newcommand{\erf}{{\rm erf}}

For the second integral, by Theorem \ref{theorem2}, we have
\begin{align*}
&\int_{T_1}^{T_2}e^{2kV}\m\left(S\left(T,V\right)\right)dV\leq b_1\int_{T_1}^{T_2} e^{2kV} T\frac{V}{\sqrt{\log\log T}}\exp\left(-\frac{V^2}{\log\log T}\left(1-\frac{4}{\log_3 T}\right)\right)dV\\
&=\frac{b_1T\sqrt{\log\log T}\log_3 T}{2\log T\sqrt{1-\frac{4}{\log_3T}}\left(\log_3 T-4\right)}\\
&\left(-k\sqrt{\pi}\erf\left(\frac{\sqrt{\log\log T}\left(4+\left(k-1\right)\log_3 T\right)}{\sqrt{\log_3 T-4}\sqrt{\log_3 T}}\right)\left(\log T\right)^{1+\frac{k^2\log_3T}{\log_3 T-4}}\sqrt{\log\log T}+\right.\\
&+k\sqrt{\pi}\erf\left(\frac{40+\left(k\sqrt{\log\log T}-10\right)\log_3 T}{\sqrt{\log_3 T-4}\sqrt{\log_3 T}}\right)\left(\log T\right)^{1+\frac{k^2\log_3 T}{\log_3 T-4}}\sqrt{\log\log T}\\
&+\left.\left(\exp\left(20k\sqrt{\log\log T}-100+\frac{400}{\log_3 T}\right)\log T-e^{100}\left(\log T\right)^{2k+\frac{4}{\log_3 T}}\right)\sqrt{1-\frac{4}{\log_3 T}}\right)\\
&\leq b_1 T\sqrt{\log\log T}\left(k\sqrt{\pi}\left(\log T\right)^{\frac{k^2\log_3 T}{\log_3T-4}}\sqrt{\log\log T}+e^{20k\sqrt{\log\log T}}\right).
\end{align*}

In the above inequality, the function $\erf$ is the error function defined by
$$\erf \left(x\right)=\frac{2}{\pi}\int_0^x e^{-t^2} dt.$$
which is bounded by $-1$ and $1$.

When $T\geq e^{e^{\left(10000k\right)^2}}$, $\log_3 T>5$, hence $\frac{4k^2}{\log_3 T-4}<\frac{64k^2}{2}$, $\log\log T<\left(\log T\right)^{64k^2/2}$, and $20\sqrt{\log\log T}\leq \log\log T$ then we have
\begin{align*}
\int_{T_1}^{T_2}e^{2kV}\m\left(S\left(T,V\right)\right)dV&\leq b_1k \sqrt{\pi} T\left(\log T\right)^{65k^2}+b_1 T\sqrt{\log\log T}\left(\log T\right)^k\\
&\leq 2b_1k\sqrt{\pi} T\left(\log T\right)^{65k^2}.
\end{align*}

For the third integral, since on $\left[T_2,T_3\right]$, $V\leq \frac{1}{2}\log\log T\log_3T$, so $\frac{7V}{4\log\log T\log_3T}\leq\frac{7}{8}$, we have

\begin{align*}
\int_{T_2}^{T_3}e^{2kV}\m\left(S\left(T,V\right)\right)dV\leq& b_2\int_{T_2}^{T_3} e^{2kV}  T\frac{V}{\sqrt{\log\log T}}\exp\left(-\frac{V^2}{64\log\log T}\right)dV\\
=&32 b_2T\sqrt{\log\log T}\left(\left(\log T\right)^{-\frac{1}{64}+2k}-\left(\log T\right)^{k\log_3T-\frac{1}{256}\left(\log_3T\right)^2}\right.\\
&+8k\sqrt{\pi}\left(-\erf\left(\frac{1}{8}\left(1-64k\right)\sqrt{\log\log T}\right)\right.\\
&+\left.\erf\left(\frac{1}{16}\sqrt{\log\log T}\left(-128k+\log_3 T\right)\right)\right)\\
&\left.\left(\log T\right)^{64k^2}\sqrt{\log\log T}\right)\\
\leq &256b_2k\sqrt{\pi}T\log\log T\left(\log T\right)^{64k^2}\\
\leq &256b_2k\sqrt{\pi} T\left(\log T\right)^{65k^2}.
\end{align*}

For the fourth integral, by Theorem \ref{theorem2}, we have

$$\int_{T_3}^\infty e^{2kV}\m\left(S\left(T,V\right)\right)dV\leq b_3\int_{T_3}^\infty e^{2kV}TV^{-V/33}dV.$$

When $V\leq e^{33\left(2k+1\right)}$, we have

\begin{align*}
\int_{T_3}^{e^{33\left(2k+1\right)}}e^{2kV}TV^{-V/33}& \leq \frac{b_3}{132k}T\left(\log\log T\log_3T\right)^2\left(\log T\right)^{k\log_3T}\\
&\leq \frac{b_3}{132k}T\left(\log T\right)^{11k^2}.
\end{align*}

When $V\geq e^{33\left(2k+1\right)}$, we have
$$e^{2kV}TV^{-V/33}\leq e^{2kV}T\left(e^{33\left(2k+1\right)}\right)^{-V/33}=Te^{-V},$$
hence we have
$$\int_{e^{33\left(2k+1\right)}}^\infty e^{2kV}\m\left(S\left(T,V\right)\right)dV< b_3 T.$$

Combining the above four results, we have

\begin{align*}
&\int_{T}^{2T} \left|\zeta\left(\frac{1}{2}+it\right)\right|^{2k}dt\\
\leq &2k\left(T\left(\log T\right)^k+2b_1k\sqrt{\pi} T\left(\log T\right)^{65k^2}+256b_2k\sqrt{\pi}T\left(\log T\right)^{65k^2}+\frac{b_3}{132k}T\left(\log T\right)^{11k^2}+b_3 T\right)\\
\leq &\left(2+2b_1\sqrt{\pi}+512b_2\sqrt{\pi}+3b_3\right)k^2 T\left(\log T\right)^{65k^2}.
\end{align*}

So, we have
\begin{equation}
\label{d3}
D_3\left(k\right)\leq \left(2+b_1\frac{8}{\varepsilon}\sqrt{\pi}+512b_2\sqrt{\pi}+3b_3\right)k^2=O\left(k^2\right).
\end{equation}

\section{Proofs of the Lemmas}

\begin{proof}
[Proof of Lemma \ref{lemma1}]
Denote $G_{\left(i,\mathcal I\right)}\left(t\right)$ as $F_i\left(t\right)$, then by the definition of the set $\mathcal T$, we know that $\left|{\Re}F_i\left(t\right)\right|\leq\bi^{-c_3}$ for all $t\in\mathcal T$. Then
$$\int_{t\in\mathcal T}\exp\left(2k{\Re}\sum_{p\leq T^{\bI}}\frac{1}{p^{\frac{1}{2}+\frac{1}{\bI\log T}+it}}\frac{\log\left(T^{\bI}/p\right)}{\log\left(T^{\bI}\right)}\right)dt=\int_{t\in\mathcal T}\prod_{i=1}^{\mathcal I}\exp\left(2k{\Re}F_i\left(t\right)\right)dt$$
\begin{align*}&=\int_{t\in\mathcal T}\prod_{i=1}^{\mathcal I}\left(1+O\left(e^{-100k\bi^{-c_3}}\right)\right)\left(\sum_{0\leq j\leq 100k\bi^{-c_3}}\frac{k{\Re} F_i\left(t\right)^j}{j!}\right)^2dt\\
&\leq \left(1+O\left(e^{-100k\bi^{-c_3}}\right)\right)\int_T^{2T} \prod_{i=1}^{\mathcal I}\left(\sum_{0\leq j\leq 100k\bi^{-c_3}}\frac{k{\Re} F_i\left(t\right)^j}{j!}\right)^2dt.
\end{align*}
For all $1\leq i\leq\mathcal I$ and $T\leq t\leq 2T$, we have
$${\Re} F_i\left(t\right)=\sum_{T^{\beta_{i-1}}<p\leq T^{\bi}}\frac{\cos\left(t\log p\right)}{p^{\frac{1}{2}+\frac{1}{\bI\log T}}}\frac{\log\left(T^{\bI}/p\right)}{\log \left(T^{\bI}\right)},$$
so
\begin{align*}
&\int_T^{2T} \prod_{i=1}^{\mathcal I}\left(\sum_{0\leq j\leq 100k\bi^{-c_3}}\frac{k{\Re} F_i\left(t\right)^j}{j!}\right)^2dt.\\
=&\sum_{\overline{j},\overline{l}}\prod_{i=1}^{\mathcal I}\frac{k^{j_i}k^{l_i}}{j_i!l_i!}\sum_{\overline{p},\overline{q}}C\left(\overline{p},\overline{q}\right)\int_T^{2T}\prod_{i=1}^{\mathcal I}\prod_{\substack{1\leq r\leq j_i,\\ 1\leq s\leq l_i}}\cos\left(t\log p\left(i,r\right)\right)\cos\left(t\log q\left(i,s\right)\right)dt
\end{align*}
where the outer sum is over vectors $\overline{j}=\left(j_1,\cdots,j_{\mathcal I}\right)$ and $\overline{l}=\left(l_1,\cdots,l_{\mathcal I}\right)$, whose entries are between $0$ and $100k\bi^{-\frac{3}{4}}$, and the inner sum is over vectors $\overline{p}=\left(p\left(1,1\right),p\left(1,2\right),\cdots,p\left(1,j_1\right),\cdots,p\left(\mathcal I,j_{\mathcal I}\right)\right)$, $\overline{q}=\left(q\left(1,1\right),\cdots,q\left(\mathcal I,l_{\mathcal I}\right)\right)$, whose entries are primes, such that for all $1\leq i\leq\mathcal I$,
$$T^{\beta_{i-1}}<p\left(i,1\right),\cdots,p\left(i,j_i\right),q\left(i,1\right),\cdots,q\left(i,l_i\right)\leq T^{\bi},$$
and
$$C\left(\overline{p},\overline{q}\right):=\prod_{i=1}^{\mathcal I}\prod_{\substack{1\leq r\leq j_i,\\ 1\leq s\leq l_i}}\frac{1}{p\left(i,r\right)^{\frac{1}{2}+\frac{1}{\bI\log T}}}{\log\left(T^{\bI}/p\left(i,r\right)\right)}{\log\left(T^{\bI}\right)}\frac{1}{q\left(i,s\right)^{\frac{1}{2}+\frac{1}{\bI\log T}}}{\log\left(T^{\bI}/q\left(i,s\right)\right)}{\log\left(T^{\bI}\right)}.$$
Then we have
\begin{equation}
\label{ratio}
\prod_{i=1}^{\mathcal I}\prod_{\substack{1\leq r\leq j_i,\\ 1\leq s\leq l_i}}p\left(i,r\right)q\left(i,s\right)\leq\prod_{i=1}^{\mathcal I} T^{\bi\left(j_i+l_i\right)}\leq \prod_{i=1}^{\mathcal I}T^{2\beta_{\mathcal I}^{1-c_4}}.
\end{equation}

In the assumption, we have
$$\frac{k\left(c_1^{1-c_3}\right)^2}{c_1^{1-c_3}-1}e^{-c_2k\left(1-c_3\right)}<\frac{1}{4}.$$

Then $2\bI^{1-c_4}<\frac{1}{2}$. Let $\delta=\frac{1}{2}-2\bI^{1-c_4}$, then
$$\prod_{i=1}^{\mathcal I}\prod_{\substack{1\leq r\leq j_i,\\ 1\leq s\leq l_i}}p\left(i,r\right)q\left(i,s\right)\leq T^{\frac{1}{2}-\delta}$$

For $n={p_1}^{\alpha_1}\cdots{p_r}^{\alpha_r}$, define $f\left(n\right)$ as: if any of $\alpha_i$ is odd, then $f\left(n\right)=0$, and otherwise
$$f\left(n\right):=\prod_{i=1}^{r}\frac{1}{2^{\alpha_i}}\frac{\alpha_i!}{\left(\alpha_i/2\right)!^2}.$$
Note that $f$ is a multiplicative, non-negative function that is supported on squares. Then we have
$$\int_T^{2T}\prod_{i=1}^{\mathcal I}\prod_{\substack{1\leq r\leq j_i,\\ 1\leq s\leq l_i}}\cos\left(t\log p\left(i,r\right)\right)\cos\left(t\log q\left(i,s\right)\right)dt=Tf\left(\prod_{i=1}^{\mathcal I}\prod_{\substack{1\leq r\leq j_i,\\ 1\leq s\leq l_i}}p\left(i,r\right)q\left(i,s\right)\right)+O\left(T^{0.1}\right).$$
Define
$$D\left(\overline{p},\overline{q}\right):=\prod_{i=1}^{\mathcal I}\prod_{\substack{1\leq r\leq j_i,\\ 1\leq s\leq l_i}}\frac{1}{\sqrt{p\left(i,r\right)}}\frac{1}{\sqrt{q\left(i,s\right)}}$$
then $C\left(\overline{p},\overline{q}\right)\leq D\left(\overline{p},\overline{q}\right)$. Hence
\begin{align}
\int_{t\in\mathcal T}\exp\left(2k{\Re}F_i\left(t\right)\right)dt\ll &T\sum_{\overline{j},\overline{l}}\prod_{i=1}^{\mathcal I}\frac{k^{j_i}k^{l_i}}{j_i!l_i!}\sum_{\overline{p},\overline{q}}D\left(\overline{p},\overline{q}\right)f\left(\prod_{i=1}^{\mathcal I}\prod_{\substack{1\leq r\leq j_i,\\ 1\leq s\leq l_i}}p\left(i,r\right)q\left(i,s\right)\right)\label{part1}\\
&+T^{\frac{1}{2}-\delta}\sum_{\overline{j},\overline{l}}\prod_{i=1}^{\mathcal I}\frac{k^{j_i}k^{l_i}}{j_i!l_i!}\sum_{\overline{p},\overline{q}}D\left(\overline{p},\overline{q}\right)\label{part2}
\end{align}

By counting squares, we can show that
$$T\sum_{\overline{j},\overline{l}}\prod_{i=1}^{\mathcal I}\frac{k^{j_i}k^{l_i}}{j_i!l_i!}\sum_{\overline{p},\overline{q}}D\left(\overline{p},\overline{q}\right)f\left(\prod_{i=1}^{\mathcal I}\prod_{\substack{1\leq r\leq j_i,\\ 1\leq s\leq l_i}}p\left(i,r\right)q\left(i,s\right)\right)\leq T\left(\log T\right)^{k^2},$$
and
$$T^{\frac{1}{2}-\delta}\sum_{\overline{j},\overline{l}}\prod_{i=1}^{\mathcal I}\frac{k^{j_i}k^{l_i}}{j_i!l_i!}\sum_{\overline{p},\overline{q}}D\left(\overline{p},\overline{q}\right)\leq T^{1-2\delta}e^{2k\mathcal I},$$
which is smaller than $T\left(\log T\right)^{k^2}$. Lemma \ref{lemma1} is then proved. 
\end{proof}

\begin{proof}
[Proof of Lemma \ref{lemma2}]
The idea of proving Lemma \ref{lemma2} is similar to that of Lemma \ref{lemma1}, which gives
\begin{align*}
&\int_{t\in\mathcal S\left(j\right)}\exp\left(2k\Re\sum_{p\leq T^{\bj}}\frac{1}{p^{\frac{1}{2}+\frac{1}{\bj\log T}+it}}\frac{\log\left(T^{\bj}/p\right)}{\log\left(T^{\bj}\right)}\right)dt\\
&\ll \left(\mathcal I-j\right)\exp\left(k^2\sum_{p\leq T^{\bj}}\frac{1}{p}\right)\left(\frac{\beta_{j+1}^{1/2}}{c_1}\sum_{T^{\bj}<p\leq T^{\beta_{j+1}}}\frac{1}{p}\right)^{2/c_1\beta_{j+1}}.
\end{align*}
When $j=0$, the left-hand-side is the measure of $\mathcal S\left(0\right)$, so
$$\m\left(\mathcal S\left(0\right)\right)\ll Te^{-2\left(\log\log T\right)^2/c_1},$$
and when $1\leq j\leq\mathcal I-1$, since 
\begin{equation}
\mathcal I-j\leq\frac{\log\left(1/\bj\right)}{\log c_1},\quad\quad \sum_{T^{bj}<p\leq T^{\beta_{j+1}}}\frac{1}{p}=\log c_1+o\left(1\right)\leq \frac{c_1}{2},\label{lemma2res}
\end{equation}
so
$$\int_{t\in\mathcal S\left(j\right)}\exp\left(2k{\Re}\sum_{p\leq T^{\bj}}\frac{1}{p^{\frac{1}{2}+\frac{1}{\bj\log T}+it}}\frac{\log\left(T^{\bj}/p\right)}{\log\left(T^{\bj}\right)}\right)dt\ll e^{-\beta_{j+1}^{-1}\log\left(1/\beta_{j+1}\right)/c_4}T\left(\log T\right)^{k^2}.$$
where
$$c_4=\frac{c_1}{4c_3-2}+1$$
For simplicity, denote $b=c_1\left(\frac{c_1}{4c_3-2}+1\right)$, then from the assumption, we have for all $1\leq j\leq\mathcal I-1$, $\frac{c_2}{b}>2$
hence
\begin{align*}
e^{2k/\bj}e^{-\beta_{j+1}^{-1}\log\left(1/\beta_{j+1}\right)/c_4}&\leq \exp\left(2k-\log\left(1/beta_{j+1}\right)/\left(b\bj\right)\right)\\
&\leq \exp\left(\left(2-\frac{c_2}{b}\right)/\bj\right)
\end{align*}
since $2-\frac{c_2}{b}<0$, so the sum of the above value for all $1\leq j\leq\mathcal I-1$ is bounded by an absolute constant.
\end{proof}

\section{Optimization}

By the previous discussion, among all the parameters, only $c_1,c_2,c_3$ affect the implicit constant, while $c_4$ can be written as an expression in terms of $c_1,c_2,c_3$. In order to simplify the calculation, we set
$$a={c_1}^{1-c_4},\quad b= c_1\left(\frac{c_1}{4c_3-2}+1\right),$$
and they need to satisfy the following relations:
$$\frac{c_2}{b}>2,$$
$$\frac{ka^2}{a-1}e^{-ak\left(1-c_4\right)}<\frac{1}{4}.$$

Since $C\left(k\right)=O\left(e^{e^{c_2k}}\right)$, so the objective is to make $c_2$ as small as possible, which, by $c_2/b>2$, requires $b$ is as small as possible too. Consider $c_1$ and $c_3$ as two independent variables, and then $b$ is a function with respect to $c_1$ and $c_3$. There is no obvious way to determine the minimum point directly, but we can find the numerical minimum by exhibiting a table of values of $b$ with different choices of $c_1$ and $c_3$. 

By the general setting, in order to make $\left(\beta_j\right)$ an increasing sequence, $c_1>1$. In order to make $c_4$ positive, $4c_3-2>0$, hence $c_3>\frac{1}{2}$. Also from the assumption we have $c_3<1$. First only consider the case $c_1<20$. Using mathematica, the minimum of $b$ in this case is reached when $c_1=1.38$ and $c_3=0.56$. If $c_1\geq 20$, then
$$b=c_1\left(\frac{c_1}{4c_3-2}+1\right)\geq c_1\left(\frac{c_1}{2}+1\right)\geq 220.$$
Clearly the minimum of $b$ cannot be reached when $c_1\geq 20$.

So, the minimum of $b$ is indeed reached when $c_1\approx 1.38$ and $c_3\approx 0.56$, which gives the value of $c_2$ as $18.63$. This shows that the minimum of the implicity constant is $e^{e^{18.63k}}$. Hence Theorem \ref{theorem1} is proved.


\begin{thebibliography}{9}
\bibitem{harper}
A. Harper. \emph{Sharp conditional bounds for moments of the Riemann zeta functions}, preprint.
Arxiv.org/abs/1305.4618v1.


\bibitem{hardy}
G. H. Hardy, J. E. Littlewood, \emph{Contributions to the theory of the Riemann zeta-function and thetheory of the distribution of primes}, Acta Math. 41 (1918), 119–196.

\bibitem{ingham}
A. E. Ingham, \emph{Mean-value theorems in the theory of the Riemann zeta-function}, Proc. LondonMath. Soc. 27 (1926), 273–300.

\bibitem{keatingsnaith}
J. P. Keating and N. C. Snaith, \emph{Random matrix theory and $\zeta\left(1/2 + it\right)$}, Comm. Math. Phys.
214 (2000), 57-89.

\bibitem{lowerbound1}
M. Radziwiłł, K. Soundararajan, \emph{Continuous lower bounds for moments of zeta and L-functions}, Mathematika 59 (2013), 119–128.

\bibitem{lowerbound2}
W. Heap, K. Soundararajan, \emph{Lower bounds for moments of zeta and L–functions revisited}, preprint. Arxiv http://arxiv.org/abs/2007.13154.

\bibitem{upperbound}
K. Soundararajan, \emph{Moments of the Riemann zeta-function}, Ann. of Math. (2) 170 (2009) 981 - 993.

\bibitem{sharpupper}
Winston Heap, Maksym Radziwiłł, K Soundararajan, \emph{Sharp upper bounds for fractional moments of the Riemann zeta function}, The Quarterly Journal of Mathematics, Volume 70, Issue 4, December 2019, Pages 1387–1396.

\bibitem{florea}
C. David, A. Florea, and M. Lalin, \emph{Nonvanishing for cubic L-functions}, Forum Math. Sigma 9 (2021), Paper No. e69, 58pp.
\end{thebibliography}
\end{document}